\providecommand{\U}[1]{\protect\rule{.1in}{.1in}}
\newtheorem{theorem}{Theorem}
\newtheorem{acknowledgement}[theorem]{Acknowledgement}
\newtheorem{corollary}[theorem]{Corollary}
\newtheorem{lemma}[theorem]{Lemma}
\newtheorem{remark}[theorem]{Remark}
\newenvironment{proof}[1][Proof]{\noindent\textbf{#1.} }{\ \rule{0.5em}{0.5em}}
\begin{document}
%
\title
{Representation of solutions of discrete linear delay systems with non permutable matrices}%
%

\author{N. I . Mahmudov\\
Department of Mathematics,
Eastern Mediterranean University\\
Famagusta, T.R. North Cyprus, via Mersin 10, Turkey}
\date{}%
%

\maketitle
%

\begin{abstract}
We introduce a discrete delayed exponential depending on sequence of matrices.
This discrete matrix gives a representation of a solution to the Cauchy
problem for a discrete linear system with pure delay with sequence of
matrices. We discard the commutativity condition used in recent works related
to the representation of solutions for discrete delay linear systems.
\end{abstract}%

\textbf{Keywords:} Linear discrete systems, delay, matrix delayed exponential
function;.non permutable matrices.

\section{Introduction}

For given $a,b\in\mathbb{Z\cup}\left\{  \pm\infty\right\}  $, $a<b$, we set
$Z_{a}^{b}:=\left\{  a,a+1,...,b\right\}  $. We study a discrete linear delay
system with sequence of matrices of the form:%

\begin{equation}
\left\{
\begin{array}
[c]{c}%
x\left(  k+1\right)  =Ax\left(  k\right)  +B_{k}x\left(  k-m\right)  +f\left(
k\right)  ,\\
x\left(  k\right)  =\varphi\left(  k\right)  ,\ \
\end{array}
\right.  \label{ds}%
\end{equation}
where $m\geq1$ is a fixed integer, $k\in Z_{0}^{\infty}$, $A=\left(
a_{ij}\right)  ,$ $\det A\neq0$ and $B_{k}=\left(  b_{ij}^{k}\right)  $ are a
constant $n\times n$ matrices, $f:Z_{0}^{\infty}\rightarrow R^{n}$ ,
$\varphi:Z_{-m}^{0}\rightarrow R^{n}$, $\Delta x(k)=x(k+1)-x(k)$. Solution
$x:Z_{-m}^{\infty}\rightarrow R^{n}$ of initial value problem is defined as an
infinite sequence $\left\{  \varphi\left(  -m\right)  ,\varphi\left(
-m+1\right)  ,...,\varphi\left(  0\right)  ,x\left(  1\right)  ,...,x\left(
k\right)  ,...\right\}  $ such that for any $k\in Z_{0}^{\infty},$ (\ref{ds}) holds.

Substituting in (\ref{ds}) $z\left(  k\right)  :=A^{-k}x\left(  k\right)  ,$
$D_{k}:=A^{-k-1}B_{k}A^{k-m},$ $k\in Z_{-m}^{\infty}$, we get an equivalent
discrete linear system of the form%
\begin{align}
z\left(  k+1\right)   &  =z\left(  k\right)  +D_{k}z\left(  k-m\right)
+A^{-k-1}f\left(  k\right)  ,\ \ \ k\in Z_{0}^{\infty},\label{ds1}\\
z\left(  k\right)   &  =A^{-k}\varphi\left(  k\right)  ,\ \ \ k\in Z_{-m}^{0}.
\label{ds2}%
\end{align}
Recently, Dibl\'{\i}k and Khusainov presented in \cite{diblik2},
\cite{diblik1} a solution of difference equations with linear parts with
constant coefficients given by permutable matrices and constant delay via a
discrete matrix delayed exponential. Advantage of discrete delayed exponential
matrix is to help transferring the classical idea to represent the solution of
linear ordinary differential equations into linear delay discrete equations.
Although there are many continued contributions in a discrete linear system
with pure delay with permutable matrices, to stability theory \cite{diblik5},
\cite{medved1}, \cite{medved2}, \cite{pospisil1}, \cite{li}, controllability
theory \cite{diblik7}, \cite{shuklin}, \cite{diblik8}, delay oscillating
systems \cite{diblik3}, discrete linear system with two delays \cite{diblik4},
\cite{diblik6}, Fredholm integral equations \cite{boichuk}, no results were
obtained for such systems with non permutable matrices. It should be mentioned
that recently non permutable case for the contiunous delay linear systems was
considered in \cite{medved3}.

We introduce a discrete delayed exponential depending on sequence of matrices
$\mathfrak{D}=\left\{  D_{1},D_{2},...\right\}  $ and give a representation of
solution to linear system of difference equations with delay parts with
nonconstant coefficients given by non permutable matrices. We discard the
commutativity condition used in recent works related to representation of
discrete delay linear system. In particular the results are new even for the
case when matrices $B_{k}$ does not depend on $k$, that is, $B_{k}=B$.

\section{Main results}

In order to drop the commutativity condition, we introduce the following
matrix%
\[
P^{\mathfrak{D}}\left(  k,d\right)  :=\left\{
\begin{tabular}
[c]{ll}%
$I,$ & $l=d=0,$\\
$%
{\displaystyle\sum\limits_{j_{1}=\left(  d-1\right)  \left(  m+1\right)
}^{k-1}}
D_{j_{1}}%
{\displaystyle\sum\limits_{j_{2}=\left(  d-1\right)  \left(  m+1\right)
}^{j_{1}}}
D_{j_{2}-m-1}...%
{\displaystyle\sum\limits_{j_{d}=\left(  d-1\right)  \left(  m+1\right)
}^{j_{d-1}}}
D_{j_{d}-\left(  d-1\right)  \left(  m+1\right)  },$ & $k\in Z_{\left(
d-1\right)  \left(  m+1\right)  +1}^{l\left(  m+1\right)  },$\\
& $l\in Z_{1}^{\infty},\ 1\leq d\leq l.$%
\end{tabular}
\right.  \
\]
We state and prove our first result. Note that in the proof of Lemma
\ref{lem:1} and Theorem \ref{thm:0} we follow the idea of the proofs of
statetmens in \cite{diblik2}.

\begin{lemma}
\label{lem:1}For any $l\in Z_{1}^{\infty},\ 1\leq d\leq l,$ $k\in Z_{\left(
d-1\right)  \left(  m+1\right)  +1}^{l\left(  m+1\right)  }$, the following
relation holds%
\begin{equation}
P^{\mathfrak{D}}\left(  k+1,d\right)  -P^{\mathfrak{D}}\left(  k,d\right)
=D_{k}P^{\mathfrak{D}}\left(  k-m,d-1\right)  . \label{rec1}%
\end{equation}

\end{lemma}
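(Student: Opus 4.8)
The plan is to evaluate the left-hand side of \eqref{rec1} directly from the defining nested-sum formula for $P^{\mathfrak{D}}$ and then to recognize the outcome, after one reindexing, as the right-hand side. First I would note that the closed form in the second line of the definition of $P^{\mathfrak{D}}(k,d)$ in fact depends only on $k$ and $d$ and is valid whenever $k\ge(d-1)(m+1)+1$; the auxiliary integer $l$ only restricts the range of $k$ and does not enter the value. I would also record the convention $P^{\mathfrak{D}}(k,0)=I$, which is what the right-hand side of \eqref{rec1} needs when $d=1$. Since $k\ge(d-1)(m+1)+1$ implies both $k+1\ge(d-1)(m+1)+1$ and $k-m\ge(d-2)(m+1)+1$, the quantities $P^{\mathfrak{D}}(k+1,d)$ and $P^{\mathfrak{D}}(k-m,d-1)$ are both given by the same closed form.

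Next I would write $P^{\mathfrak{D}}(k+1,d)$ and $P^{\mathfrak{D}}(k,d)$ side by side: the two are identical except that the outermost sum of $P^{\mathfrak{D}}(k+1,d)$ has $j_1$ running up to $k$ instead of $k-1$. Hence the difference telescopes to the single $j_1=k$ summand,
\[
P^{\mathfrak{D}}(k+1,d)-P^{\mathfrak{D}}(k,d)=D_k\sum_{j_2=(d-1)(m+1)}^{k}D_{j_2-(m+1)}\sum_{j_3=(d-1)(m+1)}^{j_2}D_{j_3-2(m+1)}\cdots\sum_{j_d=(d-1)(m+1)}^{j_{d-1}}D_{j_d-(d-1)(m+1)}.
\]
For $d=1$ the inner $(d-1)$-fold sum is empty and this already reads $D_k=D_kP^{\mathfrak{D}}(k-m,0)$, so I may assume $d\ge 2$.

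The main step is then the substitution $i_s:=j_{s+1}-(m+1)$, $s=1,\dots,d-1$, in the bracketed $(d-1)$-fold sum. I would verify that it turns that sum into the closed form for $P^{\mathfrak{D}}(k-m,d-1)$ by checking four items: the lower limits $(d-1)(m+1)$ become $(d-2)(m+1)$; the outermost upper limit $k$ becomes $k-(m+1)=(k-m)-1$; the nesting constraints $j_{s+1}\le j_s$ become $i_s\le i_{s-1}$; and the generic factor $D_{j_{s+1}-s(m+1)}$ becomes $D_{i_s-(s-1)(m+1)}$, which is exactly the $s$-th factor in the definition of $P^{\mathfrak{D}}(k-m,d-1)$. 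Relabeling $i_s$ as $j_s$ finishes the identification, and hence the proof.

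I expect the only genuine difficulty to be the bookkeeping in this last step: keeping the shifted indices consistent and confirming that all ranges and all subscripts of the $D$'s match the definition for the parameter $d-1$, in particular the boundary check on the outer upper limit and the degenerate case $d=1$. There is no conceptual obstacle beyond ``peel off the top term of the outer sum, then reindex''.
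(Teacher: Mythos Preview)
Your proof is correct and follows the same approach as the paper: peel off the $j_1=k$ term of the outer sum and reindex by shifting each inner index down by $m+1$. The paper additionally splits into the cases $k<l(m+1)$ and $k=l(m+1)$ to track which piece of the piecewise definition applies to $P^{\mathfrak{D}}(k+1,d)$, but as you observe, the nested-sum formula depends only on $k$ and $d$, so this split is purely cosmetic and your unified treatment is slightly cleaner.
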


\begin{proof}
We will prove the lemma in two cases.

\textbf{Case 1: }$\left(  d-1\right)  \left(  m+1\right)  +1\leq k<l\left(
m+1\right)  $: In this case $k-m\in Z_{\left(  d-2\right)  \left(  m+1\right)
+1}^{\left(  l-1\right)  \left(  m+1\right)  }$ and by definition
\begin{align*}
P^{\mathfrak{D}}\left(  k-m,d-1\right)   &  =%
{\displaystyle\sum\limits_{j_{1}=\left(  d-2\right)  \left(  m+1\right)
}^{k-m-1}}
D_{j_{1}}%
{\displaystyle\sum\limits_{j_{2}=\left(  d-2\right)  \left(  m+1\right)
}^{j_{2}}}
D_{j_{2}-\left(  m+1\right)  }...%
{\displaystyle\sum\limits_{j_{d-1}=\left(  d-2\right)  \left(  m+1\right)
}^{j_{d-2}}}
D_{j_{d-1}-\left(  d-2\right)  \left(  m+1\right)  },\ 2\leq d\leq l,\\
P^{\mathfrak{D}}\left(  k-m,0\right)   &  =I.
\end{align*}
For $d=1$ we get $1\leq k<l\left(  m+1\right)  $ and
\[
P^{\mathfrak{D}}\left(  k+1,1\right)  -P^{\mathfrak{D}}\left(  k,1\right)  =%
{\displaystyle\sum\limits_{j_{1}=0}^{k}}
D_{j_{1}}-%
{\displaystyle\sum\limits_{j_{1}=0}^{k-1}}
D_{j_{1}}=D_{k}=D_{k}P^{\mathfrak{D}}\left(  k-m,0\right)  .
\]
For $2\leq d\leq l$ we get
\begin{align*}
&  P^{\mathfrak{D}}\left(  k+1,d\right)  -P^{\mathfrak{D}}\left(  k,d\right)
\\
&  =%
{\displaystyle\sum\limits_{j_{1}=\left(  d-1\right)  \left(  m+1\right)  }%
^{k}}
D_{j_{1}}%
{\displaystyle\sum\limits_{j_{2}=\left(  d-1\right)  \left(  m+1\right)
}^{j_{1}}}
D_{j_{2}-m-1}...%
{\displaystyle\sum\limits_{j_{d}=\left(  d-1\right)  \left(  m+1\right)
}^{j_{d-1}}}
D_{j_{d}-\left(  d-1\right)  \left(  m+1\right)  }\\
&  -%
{\displaystyle\sum\limits_{j_{1}=\left(  d-1\right)  \left(  m+1\right)
}^{k-1}}
D_{j_{1}}%
{\displaystyle\sum\limits_{j_{2}=\left(  d-1\right)  \left(  m+1\right)
}^{j_{1}}}
D_{j_{2}-m-1}...%
{\displaystyle\sum\limits_{j_{d}=\left(  d-1\right)  \left(  m+1\right)
}^{j_{d-1}}}
D_{j_{d}-\left(  d-1\right)  \left(  m+1\right)  }\\
&  =D_{k}%
{\displaystyle\sum\limits_{j_{2}=\left(  d-1\right)  \left(  m+1\right)  }%
^{k}}
D_{j_{2}-m-1}...%
{\displaystyle\sum\limits_{j_{d}=\left(  d-1\right)  \left(  m+1\right)
}^{j_{d-1}}}
D_{j_{d}-\left(  d-1\right)  \left(  m+1\right)  }\\
&  =D_{k}%
{\displaystyle\sum\limits_{j_{1}=\left(  d-2\right)  \left(  m+1\right)
}^{k-m-1}}
D_{j_{1}}%
{\displaystyle\sum\limits_{j_{2}=\left(  d-2\right)  \left(  m+1\right)
}^{j_{2}}}
D_{j_{2}-\left(  m+1\right)  }...%
{\displaystyle\sum\limits_{j_{d-1}=\left(  d-2\right)  \left(  m+1\right)
}^{j_{d-2}}}
D_{j_{d-1}-\left(  d-2\right)  \left(  m+1\right)  }\\
&  =D_{k}P^{\mathfrak{D}}\left(  k-m,d-1\right)  ,
\end{align*}
which proves the lemma for the case 1.

\textbf{Case 2:} $k=l(m+1)$: In this case $k+1=l(m+1)+1$ and $k-m=l\left(
m+1\right)  -m\in Z_{\left(  l-1\right)  \left(  m+1\right)  +1}^{l\left(
m+1\right)  }$. For $d=1$ we have:
\[
P^{\mathfrak{D}}\left(  l(m+1)+1,1\right)  -P^{\mathfrak{D}}\left(
l(m+1),1\right)  =%
{\displaystyle\sum\limits_{j_{1}=0}^{l(m+1)}}
D_{j_{1}}-%
{\displaystyle\sum\limits_{j_{1}=0}^{l(m+1)-1}}
D_{j_{1}}=D_{l(m+1)}=D_{l(m+1)}P^{\mathfrak{D}}\left(  l\left(  m+1\right)
-m,0\right)  .
\]
For $2\leq d\leq l$ we get%
\begin{align*}
&  P^{\mathfrak{D}}\left(  l(m+1)+1,d\right)  -P^{\mathfrak{D}}\left(
l(m+1),d\right) \\
&  =%
{\displaystyle\sum\limits_{j_{1}=\left(  d-1\right)  \left(  m+1\right)
}^{l(m+1)}}
D_{j_{1}}%
{\displaystyle\sum\limits_{j_{2}=\left(  d-1\right)  \left(  m+1\right)
}^{j_{1}}}
D_{j_{2}-m-1}...%
{\displaystyle\sum\limits_{j_{d}=\left(  d-1\right)  \left(  m+1\right)
}^{j_{d-1}}}
D_{j_{d}-\left(  d-1\right)  \left(  m+1\right)  }\\
&  -%
{\displaystyle\sum\limits_{j_{1}=\left(  d-1\right)  \left(  m+1\right)
}^{l(m+1)-1}}
D_{j_{1}}%
{\displaystyle\sum\limits_{j_{2}=\left(  d-1\right)  \left(  m+1\right)
}^{j_{1}}}
D_{j_{2}-m-1}...%
{\displaystyle\sum\limits_{j_{d}=\left(  d-1\right)  \left(  m+1\right)
}^{j_{d-1}}}
D_{j_{d}-\left(  d-1\right)  \left(  m+1\right)  }\\
&  =D_{l\left(  m+1\right)  }%
{\displaystyle\sum\limits_{j_{2}=\left(  d-1\right)  \left(  m+1\right)
}^{l\left(  m+1\right)  }}
D_{j_{2}-m-1}%
{\displaystyle\sum\limits_{j_{3}=\left(  d-1\right)  \left(  m+1\right)
}^{j_{2}}}
D_{j_{3}-2\left(  m+1\right)  }...%
{\displaystyle\sum\limits_{j_{d}=\left(  d-1\right)  \left(  m+1\right)
}^{j_{d-1}}}
D_{j_{d}-\left(  d-1\right)  \left(  m+1\right)  }\\
&  =D_{l\left(  m+1\right)  }%
{\displaystyle\sum\limits_{j_{1}=\left(  d-2\right)  \left(  m+1\right)
}^{\left(  l-1\right)  \left(  m+1\right)  }}
D_{j_{1}}%
{\displaystyle\sum\limits_{j_{2}=\left(  d-2\right)  \left(  m+1\right)
}^{j_{1}}}
D_{j_{2}-\left(  m+1\right)  }...%
{\displaystyle\sum\limits_{j_{d-1}=\left(  d-2\right)  \left(  m+1\right)
}^{j_{d-2}}}
D_{j_{d-1}-\left(  d-2\right)  \left(  m+1\right)  }\\
&  =D_{l\left(  m+1\right)  }P^{\mathfrak{D}}\left(  l\left(  m+1\right)
-m,d-1\right)  ,\ 2\leq d\leq l.
\end{align*}

\end{proof}

Using $P\left(  k,d\right)  $ we may define the delayed exponential depending
on sequence of matrices$:$%
\begin{equation}
e_{m}^{\mathfrak{D}}\left(  k\right)  =\left\{
\begin{tabular}
[c]{ll}%
$\Theta$ & $k\in Z_{-\infty}^{-m-1},$\\
$I$ & $k\in Z_{-m}^{0},$\\
$I+%
{\displaystyle\sum\limits_{d=1}^{l}}
P\left(  k,d\right)  $ & $k\in Z_{\left(  l-1\right)  \left(  m+1\right)
+1}^{l\left(  m+1\right)  },\ \ l\in Z_{1}^{\infty}.$%
\end{tabular}
\ \ \ \ \ \ \ \ \ \ \ \ \ \right.  \label{exp}%
\end{equation}

\begin{theorem}
\label{thm:0}For any $k\in Z_{\left(  l-1\right)  \left(  m+1\right)
+1}^{l\left(  m+1\right)  }$, the following relation holds
\begin{equation}
e_{m}^{\mathfrak{D}}\left(  k+1\right)  -e_{m}^{\mathfrak{D}}\left(  k\right)
=D_{k}e_{m}^{\mathfrak{D}}\left(  k-m\right)  ,\ \ k\in Z_{\left(  l-1\right)
\left(  m+1\right)  +1}^{l\left(  m+1\right)  -1}. \label{rr11}%
\end{equation}

\end{theorem}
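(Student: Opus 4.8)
The plan is to reduce Theorem \ref{thm:0} to Lemma \ref{lem:1} by unwinding the definition (\ref{exp}) of $e_{m}^{\mathfrak{D}}$. Fix $l\in Z_{1}^{\infty}$ and $k\in Z_{(l-1)(m+1)+1}^{l(m+1)-1}$, so that both $k$ and $k+1$ lie in the block $Z_{(l-1)(m+1)+1}^{l(m+1)}$ on which $e_{m}^{\mathfrak{D}}$ is given by $I+\sum_{d=1}^{l}P^{\mathfrak{D}}(\cdot,d)$. Subtracting, the two leading identity matrices cancel and I get
\[
e_{m}^{\mathfrak{D}}(k+1)-e_{m}^{\mathfrak{D}}(k)=\sum_{d=1}^{l}\bigl(P^{\mathfrak{D}}(k+1,d)-P^{\mathfrak{D}}(k,d)\bigr).
\]
Now for each $d$ with $1\leq d\leq l$, the point $k$ lies in $Z_{(d-1)(m+1)+1}^{l(m+1)}$ (since $d\leq l$ forces $(d-1)(m+1)+1\leq (l-1)(m+1)+1\leq k$), so Lemma \ref{lem:1} applies and gives $P^{\mathfrak{D}}(k+1,d)-P^{\mathfrak{D}}(k,d)=D_{k}P^{\mathfrak{D}}(k-m,d-1)$. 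Hence the sum telescopes into
\[
e_{m}^{\mathfrak{D}}(k+1)-e_{m}^{\mathfrak{D}}(k)=D_{k}\sum_{d=1}^{l}P^{\mathfrak{D}}(k-m,d-1)=D_{k}\sum_{d=0}^{l-1}P^{\mathfrak{D}}(k-m,d).
\]

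The remaining task is to identify $\sum_{d=0}^{l-1}P^{\mathfrak{D}}(k-m,d)$ with $e_{m}^{\mathfrak{D}}(k-m)$, and this is where the case analysis lives — the main (though modest) obstacle. I would split according to where $k-m$ falls. If $k\geq (l-1)(m+1)+m+1=l(m+1)$ is excluded, so in fact $(l-1)(m+1)+1\leq k\leq l(m+1)-1$; writing $k=(l-1)(m+1)+r$ with $1\leq r\leq m$, one has $k-m=(l-1)(m+1)+r-m=(l-2)(m+1)+(r-1)+1$. When $r\geq 2$, i.e. $k-m\in Z_{(l-2)(m+1)+1}^{(l-1)(m+1)-1}\subseteq Z_{(l-2)(m+1)+1}^{(l-1)(m+1)}$, the defining formula (\ref{exp}) gives $e_{m}^{\mathfrak{D}}(k-m)=I+\sum_{d=1}^{l-1}P^{\mathfrak{D}}(k-m,d)=\sum_{d=0}^{l-1}P^{\mathfrak{D}}(k-m,d)$, using $P^{\mathfrak{D}}(k-m,0)=I$; so the identity holds. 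When $r=1$, i.e. $k=(l-1)(m+1)+1$ and $k-m=(l-2)(m+1)+1$ (for $l\geq 2$) or $k-m=-m+1\in Z_{-m}^{0}$ (for $l=1$): in the $l=1$ case the sum is just $P^{\mathfrak{D}}(k-m,0)=I=e_{m}^{\mathfrak{D}}(k-m)$; in the $l\geq 2$ case $k-m$ is the left endpoint of the block $Z_{(l-2)(m+1)+1}^{(l-1)(m+1)}$ and again (\ref{exp}) yields $e_{m}^{\mathfrak{D}}(k-m)=I+\sum_{d=1}^{l-1}P^{\mathfrak{D}}(k-m,d)$.

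A subtle point to check carefully is that when $d-1>$ the block index of $k-m$ the term $P^{\mathfrak{D}}(k-m,d-1)$ should vanish so that the two sums match exactly; in the ranges above this does not actually occur because $k-m$ always lies in block $l-1$, so all of $d-1\in\{0,1,\dots,l-1\}$ are legitimate indices for $k-m$ and no extra or missing terms appear. Thus in every case $\sum_{d=0}^{l-1}P^{\mathfrak{D}}(k-m,d)=e_{m}^{\mathfrak{D}}(k-m)$, and combining with the telescoping identity above gives
\[
e_{m}^{\mathfrak{D}}(k+1)-e_{m}^{\mathfrak{D}}(k)=D_{k}e_{m}^{\mathfrak{D}}(k-m),
\]
which is (\ref{rr11}). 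The only real work is the bookkeeping of the block boundaries in the last step; everything else is a direct appeal to Lemma \ref{lem:1}.
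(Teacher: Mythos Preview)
Your argument for $k\in Z_{(l-1)(m+1)+1}^{l(m+1)-1}$ is correct and is precisely the paper's Case~1: since $k$ and $k+1$ lie in the same block, the identity terms cancel, each difference $P^{\mathfrak{D}}(k+1,d)-P^{\mathfrak{D}}(k,d)$ is handled by Lemma~\ref{lem:1}, and the resulting sum $\sum_{d=0}^{l-1}P^{\mathfrak{D}}(k-m,d)$ is identified with $e_{m}^{\mathfrak{D}}(k-m)$ because $k-m$ falls in block $l-1$ (or in $Z_{-m}^{0}$ when $l=1$). One harmless slip: your formula $k-m=(l-2)(m+1)+(r-1)+1$ is off by one---the correct value is $(l-2)(m+1)+r+1$---so your split into $r=1$ versus $r\geq 2$ is actually unnecessary, though the conclusion is unaffected.

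What you do not cover is the boundary $k=l(m+1)$. Although the displayed range in (\ref{rr11}) stops at $l(m+1)-1$, the prose of the theorem claims the full block, and the downstream uses (Theorems~\ref{thm:1} and the representation formulas) require the recursion for every $k\in Z_{0}^{\infty}$. The paper therefore treats $k=l(m+1)$ as a separate Case~2: here $k+1=l(m+1)+1$ lies in block $l+1$, so $e_{m}^{\mathfrak{D}}(k+1)=I+\sum_{d=1}^{l+1}P^{\mathfrak{D}}(k+1,d)$ acquires an extra summand $P^{\mathfrak{D}}(l(m+1)+1,l+1)=D_{l(m+1)}D_{(l-1)(m+1)}\cdots D_{0}$, and one must verify that this extra term equals $D_{l(m+1)}P^{\mathfrak{D}}(l(m+1)-m,l)$ so that the whole expression recombines into $D_{k}e_{m}^{\mathfrak{D}}(k-m)$ with $k-m$ now in block $l$. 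This boundary computation is the only substantive piece missing from your write-up.
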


\begin{proof}
The proof is based on Lemma \ref{lem:1}. Let us consider the cases when
$\left(  l-1\right)  \left(  m+1\right)  +1\leq k<l\left(  m+1\right)  $ and
$k=l(m+1).$

Case 1: $\left(  l-1\right)  \left(  m+1\right)  +1\leq k<l\left(  m+1\right)
$: By Lemma \ref{lem:1},
\begin{align*}
e_{m}^{\mathfrak{D}}\left(  k+1\right)  -e_{m}^{\mathfrak{D}}\left(  k\right)
&  =%
{\displaystyle\sum\limits_{d=1}^{l}}
P^{\mathfrak{D}}\left(  k+1,d\right)  -%
{\displaystyle\sum\limits_{d=1}^{l}}
P^{\mathfrak{D}}\left(  k,d\right)  =D_{k}%
{\displaystyle\sum\limits_{d=1}^{l}}
P^{\mathfrak{D}}\left(  k-m,d-1\right) \\
&  =D_{k}%
{\displaystyle\sum\limits_{d=0}^{l-1}}
P^{\mathfrak{D}}\left(  k-m,d\right)  =D_{k}e_{m}^{\mathfrak{D}}\left(
k-m\right)  .\ \ \ k-m\in Z_{\left(  l-2\right)  \left(  m+1\right)
+1}^{\left(  l-1\right)  \left(  m+1\right)  }.
\end{align*}
Case 2: $k=l(m+1)$: By Lemma \ref{lem:1},%
\begin{align*}
e_{m}^{\mathfrak{D}}\left(  k+1\right)  -e_{m}^{\mathfrak{D}}\left(  k\right)
&  =%
{\displaystyle\sum\limits_{d=1}^{l+1}}
P^{\mathfrak{D}}\left(  l(m+1)+1,d\right)  -%
{\displaystyle\sum\limits_{d=1}^{l}}
P^{\mathfrak{D}}\left(  l(m+1),d\right) \\
&  =%
{\displaystyle\sum\limits_{d=1}^{l}}
\left(  P^{\mathfrak{D}}\left(  l(m+1)+1,d\right)  -P^{\mathfrak{D}}\left(
l(m+1),d\right)  \right)  +P^{\mathfrak{D}}\left(  l(m+1)+1,l+1\right) \\
&  =D_{l\left(  m+1\right)  }%
{\displaystyle\sum\limits_{d=1}^{l}}
P^{\mathfrak{D}}\left(  l\left(  m+1\right)  -m,d-1\right)  +D_{l\left(
m+1\right)  }D_{\left(  l-1\right)  \left(  m+1\right)  }...D_{0}\\
&  =D_{l\left(  m+1\right)  }\left(  I+%
{\displaystyle\sum\limits_{d=1}^{l-1}}
P^{\mathfrak{D}}\left(  l\left(  m+1\right)  -m,d\right)  +P^{\mathfrak{D}%
}\left(  l\left(  m+1\right)  -m,l\right)  \right) \\
&  =D_{l\left(  m+1\right)  }e_{m}^{\mathfrak{D}}\left(  k-m\right)  .
\end{align*}
Here we used the following formula
\begin{align*}
P^{\mathfrak{D}}\left(  l(m+1)+1,l+1\right)   &  =%
{\displaystyle\sum\limits_{j_{1}=l\left(  m+1\right)  }^{l(m+1)}}
D_{j_{1}}%
{\displaystyle\sum\limits_{j_{2}=l\left(  m+1\right)  }^{j_{1}}}
D_{j_{2}-m-1}...%
{\displaystyle\sum\limits_{j_{l+1}=l\left(  m+1\right)  }^{j_{l}}}
D_{j_{l+1}-l\left(  m+1\right)  }\\
&  =D_{l\left(  m+1\right)  }%
{\displaystyle\sum\limits_{j_{2}=l\left(  m+1\right)  }^{l\left(  m+1\right)
}}
D_{j_{2}-m-1}...%
{\displaystyle\sum\limits_{j_{l+1}=l\left(  m+1\right)  }^{j_{l}}}
D_{j_{l+1}-l\left(  m+1\right)  }\\
&  =D_{l\left(  m+1\right)  }D_{\left(  l-1\right)  \left(  m+1\right)
}...D_{0}.
\end{align*}

\end{proof}

Now using the matrix delayed exponential we can solve the following linear
matrix equation:%
\begin{align}
\Phi\left(  k+1\right)   &  =A\Phi\left(  k\right)  +D_{k}\Phi\left(
k-m\right)  ,\ k\in Z_{0}^{\infty},\nonumber\\
X\left(  k\right)   &  =A^{k},\ \ \ k\in Z_{-m}^{0}. \label{ds3}%
\end{align}

\begin{theorem}
\label{thm:1} The matrix%
\[
\Phi\left(  k\right)  =\left\{
\begin{tabular}
[c]{lll}%
$\Theta$ & $\text{if\ }$ & $k\in Z_{-\infty}^{-m-1},$\\
$A^{k}$ & $\text{if\ }$ & $k\in Z_{-m}^{0},$\\
$A^{k}\left(  I+%
{\displaystyle\sum\limits_{j=1}^{l}}
P\left(  k,j\right)  \right)  $ & $\text{if\ }$ & $k\in Z_{\left(  l-1\right)
\left(  m+1\right)  +1}^{l\left(  m+1\right)  }.$%
\end{tabular}
\ \ \ \ \ \right.
\]
solves the problem (\ref{ds3}).
\end{theorem}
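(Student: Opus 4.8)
The plan is to verify the two pieces of the piecewise definition of $\Phi$ separately: the delay recurrence for $k\in Z_0^\infty$ and the initial condition for $k\in Z_{-m}^0$. The initial condition is immediate: for $k\in Z_{-m}^0$ we have $\Phi(k)=A^k$ by definition, which is exactly $X(k)=A^k$ as required. So the substance is checking $\Phi(k+1)=A\Phi(k)+D_k\Phi(k-m)$ for all $k\in Z_0^\infty$.

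First I would connect $\Phi$ to the delayed exponential of the associated ``normalized'' problem. Recall that the substitution $z(k)=A^{-k}x(k)$, $D_k=A^{-k-1}B_kA^{k-m}$ turns the original system into one with trivial linear part; correspondingly, writing $\Phi(k)=A^k e_m^{\mathfrak D}(k)$ identifies $\Phi$ with $A^k$ times the delayed exponential $e_m^{\mathfrak D}$ of (\ref{exp}), since $e_m^{\mathfrak D}(k)=I+\sum_{j=1}^{l}P^{\mathfrak D}(k,j)$ on $Z_{(l-1)(m+1)+1}^{l(m+1)}$, and $e_m^{\mathfrak D}(k)=I=A^{-k}A^k$ on $Z_{-m}^0$, and $e_m^{\mathfrak D}(k)=\Theta$ on $Z_{-\infty}^{-m-1}$. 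Thus proving Theorem \ref{thm:1} reduces to showing
\[
A^{k+1}e_m^{\mathfrak D}(k+1)=A\cdot A^k e_m^{\mathfrak D}(k)+D_k\cdot A^{k-m}e_m^{\mathfrak D}(k-m),
\]
i.e., after cancelling $A^{k+1}$ on the left against $A\cdot A^k$ on the right, to showing
\[
e_m^{\mathfrak D}(k+1)-e_m^{\mathfrak D}(k)=A^{-k-1}D_kA^{k-m}\,e_m^{\mathfrak D}(k-m).
\]
Now here one must be careful: Theorem \ref{thm:0} gives $e_m^{\mathfrak D}(k+1)-e_m^{\mathfrak D}(k)=D_k e_m^{\mathfrak D}(k-m)$ for the delayed exponential built from an abstract sequence $\mathfrak D$, so the cleanest route is to apply Theorem \ref{thm:0} directly with the sequence $\mathfrak D$ of the problem (\ref{ds3}) (its $D_k$), giving $e_m^{\mathfrak D}(k+1)-e_m^{\mathfrak D}(k)=D_k e_m^{\mathfrak D}(k-m)$, and then multiply through by $A^{k+1}$: since $A$ is constant, $A^{k+1}(e_m^{\mathfrak D}(k+1)-e_m^{\mathfrak D}(k))=A\cdot A^k e_m^{\mathfrak D}(k+1)\cdot A^{-1}\cdots$ — no, more simply, $A^{k+1}D_k e_m^{\mathfrak D}(k-m)=A\cdot A^k D_k A^{-(k-m)}\cdot A^{k-m}e_m^{\mathfrak D}(k-m)$, and one checks $A^{k}D_kA^{-(k-m)}=A^{k}A^{-k-1}B_kA^{k-m}A^{-(k-m)}\cdot$ — wait, that is not quite $B_k$ unless one tracks the powers; actually $A^{k+1}D_k=A^{k+1}A^{-k-1}B_kA^{k-m}=B_kA^{k-m}$, so $A^{k+1}D_k e_m^{\mathfrak D}(k-m)=B_k A^{k-m}e_m^{\mathfrak D}(k-m)=B_k\Phi(k-m)$, exactly the delay term. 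Thus the recurrence follows.

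The remaining point is a boundary/indexing check: Theorem \ref{thm:0} as stated only asserts (\ref{rr11}) for $k\in Z_{(l-1)(m+1)+1}^{l(m+1)-1}$, i.e.\ it omits the endpoint $k=l(m+1)$; but inspecting its proof, Case 2 there does treat $k=l(m+1)$ and establishes $e_m^{\mathfrak D}(k+1)-e_m^{\mathfrak D}(k)=D_{l(m+1)}e_m^{\mathfrak D}(k-m)$, so the identity in fact holds for \emph{all} $k\in Z_0^\infty$; I would simply invoke that. One also checks the small-$k$ cases where $k-m<0$: for $0\le k\le m$ the term $\Phi(k-m)=A^{k-m}$ (if $k-m\ge-m$, i.e.\ always here) or $\Theta$, consistent with $e_m^{\mathfrak D}(k-m)$ being $I$ or $\Theta$, and one verifies directly from the definition of $e_m^{\mathfrak D}$ that $e_m^{\mathfrak D}(k+1)-e_m^{\mathfrak D}(k)=D_k e_m^{\mathfrak D}(k-m)$ there too (for $0\le k\le m-1$ both sides involve only $P^{\mathfrak D}(\cdot,1)=\sum D_{j_1}$ and $e_m^{\mathfrak D}(k-m)=I$; for $k=m$ one crosses into $l=2$). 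I expect the main obstacle to be purely bookkeeping: making sure the index ranges in the piecewise definitions of $\Phi$ and $e_m^{\mathfrak D}$ line up at the junctions $k=l(m+1)$ and at $k=m$, and confirming that the conjugation identity $A^{k+1}D_k=B_kA^{k-m}$ is used with the correct exponents; there is no conceptual difficulty once Theorem \ref{thm:0} is in hand.
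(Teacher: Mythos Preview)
The paper does not actually supply a proof of this theorem; it is stated immediately after Theorem~\ref{thm:0} as an evident consequence. Your reduction $\Phi(k)=A^{k}e_m^{\mathfrak D}(k)$ together with the recurrence (\ref{rr11}) is exactly the intended argument, and your observation that Case~2 in the proof of Theorem~\ref{thm:0} already handles the endpoint $k=l(m+1)$ (so that (\ref{rr11}) holds for all $k\in Z_0^\infty$, including $k=0$, which you also check by hand) is correct and necessary.

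One point deserves to be stated cleanly rather than discovered mid-computation. Equation (\ref{ds3}) as printed has $D_k$ in the delay term, but your own calculation shows
\[
\Phi(k+1)-A\Phi(k)=A^{k+1}\bigl(e_m^{\mathfrak D}(k+1)-e_m^{\mathfrak D}(k)\bigr)=A^{k+1}D_k\,e_m^{\mathfrak D}(k-m)=B_kA^{k-m}e_m^{\mathfrak D}(k-m)=B_k\Phi(k-m),
\]
not $D_k\Phi(k-m)$. This is a typo in the paper, not a gap in your argument: $\Phi$ is meant to be the fundamental matrix of the original homogeneous system $x(k+1)=Ax(k)+B_kx(k-m)$ (compare (\ref{q1}) and the use of $\Phi$ in (\ref{rep1})), and with the literal $D_k$ the claim would fail whenever $A^{k+1}D_k\neq D_kA^{k-m}$. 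Say this up front, then give the one-line chain above; the exploratory detours (``wait'', ``no, more simply'', the abandoned attempt to match $A^{-k-1}D_kA^{k-m}$) should be removed from the final write-up.
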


\begin{remark}
It can be shown that%
\[%
{\displaystyle\sum\limits_{j_{1}=\left(  d-1\right)  \left(  m+1\right)
}^{k-1}}
{\displaystyle\sum\limits_{j_{2}=\left(  d-1\right)  \left(  m+1\right)
}^{j_{1}}}
...%
{\displaystyle\sum\limits_{j_{d}=\left(  d-1\right)  \left(  m+1\right)
}^{j_{d-1}}}
1=\left(
\begin{array}
[c]{c}%
k-\left(  d-1\right)  m\\
d
\end{array}
\right)  ,\ \ \ \ \ \ \ k\in Z_{\left(  d-1\right)  \left(  m+1\right)
+1}^{l\left(  m+1\right)  }.
\]
If $B_{k}$ does not depend on $k$, $B_{k}=B$ and matrices $A$ and $B$ are
permutable ($AB=BA$), then $D_{k}=A^{-k-1}BA^{k-m}=A^{-1}BA^{-m}=:D$ and
\[
P^{\mathfrak{D}}\left(  k,d\right)  =A^{-d}B^{d}A^{-dm}%
{\displaystyle\sum\limits_{j_{1}=\left(  d-1\right)  \left(  m+1\right)
}^{k-1}}
{\displaystyle\sum\limits_{j_{2}=\left(  d-1\right)  \left(  m+1\right)
}^{j_{1}}}
...%
{\displaystyle\sum\limits_{j_{d}=\left(  d-1\right)  \left(  m+1\right)
}^{j_{d-1}}}
1=A^{-d}B^{d}A^{-dm}\left(
\begin{array}
[c]{c}%
k-\left(  d-1\right)  m\\
d
\end{array}
\right)  .
\]
In this case
\[
e_{m}^{\mathfrak{D}}\left(  k\right)  =\left\{
\begin{tabular}
[c]{ll}%
$\Theta$ & $k\in Z_{-\infty}^{-m-1},$\\
$I$ & $k\in Z_{-m}^{0},$\\
$I+%
{\displaystyle\sum\limits_{d=1}^{l}}
A^{-d}B^{d}A^{-dm}\left(
\begin{array}
[c]{c}%
k-\left(  d-1\right)  m\\
d
\end{array}
\right)  ,$ & $k\in Z_{\left(  l-1\right)  \left(  m+1\right)  +1}^{l\left(
m+1\right)  },$%
\end{tabular}
\ \ \ \ \ \ \ \ \ \ \ \ \ \ \right.
\]
and coincides with the discrete matrix delayed exponential defined in
\cite{diblik2}.
\end{remark}

Using $\Phi\left(  k\right)  $ we give the representation of solution to the
homogeneous delay problem%
\begin{align}
x\left(  k+1\right)   &  =Ax\left(  k\right)  +B_{k}x\left(  k-m\right)
,\ \ \ k\in Z_{0}^{\infty},\label{q1}\\
x\left(  k\right)   &  =\varphi\left(  k\right)  ,\ \ k\in Z_{-m}^{0}.
\label{q2}%
\end{align}

\begin{theorem}
The solution of the problem (\ref{q1}), (\ref{q2}) can be expressed as%
\begin{equation}
x\left(  k\right)  =\Phi\left(  k\right)  A^{-m}\varphi\left(  -m\right)
+A^{m}%
{\displaystyle\sum\limits_{j=-m+1}^{0}}
\Phi\left(  k-m-j\right)  \left(  \varphi\left(  j\right)  -A\varphi\left(
j-1\right)  \right)  . \label{rep1}%
\end{equation}

\end{theorem}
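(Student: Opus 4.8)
The plan is to verify directly that the function $x(k)$ defined by \eqref{rep1} satisfies both the initial condition \eqref{q2} on $Z_{-m}^{0}$ and the recurrence \eqref{q1} on $Z_{0}^{\infty}$. The key tool is Theorem~\ref{thm:1}, which tells us that $\Phi(k)$ solves the matrix problem \eqref{ds3}, i.e. $\Phi(k+1)=A\Phi(k)+D_{k}\Phi(k-m)$ with $\Phi(k)=A^{k}$ on $Z_{-m}^{0}$ and $\Phi(k)=\Theta$ for $k\le -m-1$. Throughout I will freely use that $\Phi(k-m-j)$ lands in the ``pre-history'' regime $Z_{-\infty}^{-m-1}$, where it vanishes, or in $Z_{-m}^{0}$, where it equals $A^{k-m-j}$, for the relevant small values of $k$; this is what makes the initial-condition check work.

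\medskip

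\noindent\textbf{Step 1: the initial condition.} Fix $k\in Z_{-m}^{0}$. In the sum over $j\in Z_{-m+1}^{0}$ the argument $k-m-j$ ranges over $Z_{-2m}^{k-m}\subseteq Z_{-\infty}^{0}$. I would split off the terms where $k-m-j\ge -m$ (so $\Phi(k-m-j)=A^{k-m-j}$) from those where $k-m-j\le -m-1$ (so $\Phi(k-m-j)=\Theta$). Since $\Phi(k)=A^{k}$ for $k\in Z_{-m}^{0}$ as well, the right-hand side of \eqref{rep1} becomes $A^{k-m}\varphi(-m)+A^{m}\sum A^{k-m-j}(\varphi(j)-A\varphi(j-1))$, restricted to the surviving indices. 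The sum over $(\varphi(j)-A\varphi(j-1))$ telescopes after pulling out the power of $A$: one gets a collapse of the form $A^{k-m-j}\varphi(j)-A^{k-m-j+1}\varphi(j-1)$, whose consecutive terms cancel, leaving only the top endpoint $A^{k-j}\varphi(j)\big|_{j=0}=A^{k}\varphi(0)$ and a boundary term that, combined with $A^{k-m}\varphi(-m)$ and the $A^{m}$ prefactor, reproduces exactly $\varphi(k)$. (One must be slightly careful that the lower telescoping endpoint matches the $\Theta$-regime cutoff; this is the place where the piecewise definition of $\Phi$ is genuinely used.)

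\medskip

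\noindent\textbf{Step 2: the recurrence.} For $k\in Z_{0}^{\infty}$, apply the difference operator to \eqref{rep1}. Because $\Phi$ satisfies \eqref{ds3}, we have $\Phi(k+1)=A\Phi(k)+D_{k}\Phi(k-m)$ and $\Phi(k+1-m-j)=A\Phi(k-m-j)+D_{k-m-j}\Phi(k-2m-j)$ for each $j$; here one should check that the index $k-m-j\ge 0$ so that \eqref{ds3} is applicable, which holds since $j\le 0$. Substituting, $x(k+1)$ splits into an ``$A\,\cdot\,$(old expression)'' part, which is precisely $Ax(k)$, plus a ``$D_{k}\Phi(k-m)A^{-m}\varphi(-m)+A^{m}\sum D_{k-m-j}\Phi(k-2m-j)(\varphi(j)-A\varphi(j-1))$'' part. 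The claim is that this second part equals $B_{k}x(k-m)$. Using $D_{k}=A^{-k-1}B_{k}A^{k-m}$ and commuting $A^{-k-1}B_{k}$ past everything that follows is not legitimate (the matrices are non-permutable) — instead I would recognize that $x(k-m)$ is, by \eqref{rep1} evaluated at $k-m$, equal to $\Phi(k-m)A^{-m}\varphi(-m)+A^{m}\sum_{j}\Phi(k-2m-j)(\varphi(j)-A\varphi(j-1))$, and that $B_{k}A^{k-m}\Phi(\ell)=B_{k}(\text{the }x\text{-style factor})$ only after the substitution $z(k)=A^{-k}x(k)$, $D_k = A^{-k-1}B_kA^{k-m}$ is unwound. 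The cleanest route is: pass to $z$-coordinates via $z=A^{-k}x$, show $e_{m}^{\mathfrak{D}}(k)$ (Theorem~\ref{thm:0}) plays for \eqref{ds1} the role $\Phi$ plays for \eqref{ds3}, prove the $z$-representation $z(k)=e_{m}^{\mathfrak D}(k)z(-m)+\sum_{j=-m+1}^{0}e_{m}^{\mathfrak D}(k-m-j)(z(j)-z(j-1))$ by the same two-step verification (initial condition + Theorem~\ref{thm:0}), and then translate back using $\Phi(k)=A^{k}e_{m}^{\mathfrak D}(k)$ and $z(j)=A^{-j}\varphi(j)$, so that $z(j)-z(j-1)=A^{-j}\varphi(j)-A^{-j+1}\varphi(j-1)=A^{-j}(\varphi(j)-A\varphi(j-1))$, which upon multiplying through by appropriate powers of $A$ yields exactly \eqref{rep1}.

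\medskip

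\noindent\textbf{Main obstacle.} The delicate point is Step~2: keeping track of the powers of $A$ without ever commuting $A$ past $B_{k}$ or $D_{k}$. Working in the $z$-variables and invoking Theorem~\ref{thm:0} there, rather than manipulating $\Phi$ and $A$ directly, sidesteps this entirely, since in \eqref{ds1} the leading matrix is the identity and the only matrix coefficient is $D_{k}$. Secondarily, in both steps one must handle the boundary indices of the telescoping sum carefully so that the $\Theta$-regime of $e_{m}^{\mathfrak D}$ (equivalently of $\Phi$) is invoked exactly where the algebra needs a term to drop out; getting the range $j\in Z_{-m+1}^{0}$ rather than $Z_{-m}^{0}$ right is what makes the $\varphi(-m)$ term appear with the separate coefficient $\Phi(k)A^{-m}$.
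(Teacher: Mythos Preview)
Your proposal is correct and, once you pivot in Step~2 to the $z$-coordinates, it is essentially the paper's own proof: the paper also passes to $z(k)=A^{-k}x(k)$, works with $e_m^{\mathfrak{D}}$ via Theorem~\ref{thm:0}, and then translates back using $\Phi(k)=A^{k}e_m^{\mathfrak{D}}(k)$. The only organizational difference is that the paper posits the ansatz $z(k)=e_m^{\mathfrak{D}}(k)\,C+\sum_{j=-m+1}^{0}e_m^{\mathfrak{D}}(k-m-j)\,\omega(j)$ with unknown $C,\omega$, checks that it satisfies \eqref{qw2} for every choice (your recurrence step), and then \emph{solves} the initial conditions for $C=A^{m}\varphi(-m)$ and $\omega(j)=A^{-j}(\varphi(j)-A\varphi(j-1))$; you instead \emph{verify} the finished formula. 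The telescoping computation in your Step~1 is exactly the paper's determination of $C$ and $\omega$, run in the opposite direction.

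One caveat on your last sentence. When you multiply the $z$-representation by $A^{k}$ you get $A^{k}e_m^{\mathfrak{D}}(k-m-j)=A^{m+j}\,\Phi(k-m-j)$, so the translated sum is
\[
\sum_{j=-m+1}^{0}A^{m+j}\,\Phi(k-m-j)\,A^{-j}\bigl(\varphi(j)-A\varphi(j-1)\bigr),
\]
with the powers of $A$ depending on $j$ and flanking $\Phi$ on both sides. This is precisely what the paper's proof actually derives (and what reappears in the subsequent Corollary). The displayed statement \eqref{rep1}, with a single $A^{m}$ factored outside and $A^{-m}$ in the first term, does not literally follow from this without commuting $A$ past $\Phi$; so your claim ``yields exactly \eqref{rep1}'' should be understood as yielding the formula at the end of the paper's proof rather than the printed \eqref{rep1}.
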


\begin{proof}
Introduce a new variable $z\left(  k\right)  =A^{-k}x\left(  k\right)  ,$
$k\in Z_{-m}^{\infty}$. Then the problem (\ref{q1}), (\ref{q2}) is equivalent
to%
\begin{align}
z\left(  k+1\right)   &  =z\left(  k\right)  +D_{k}z\left(  k-m\right)
,\ \ \ k\in Z_{0}^{\infty},\label{qw2}\\
z\left(  k\right)   &  =A^{-k}\varphi\left(  k\right)  ,\ \ \ k\in Z_{-m}^{0}.
\label{qw1}%
\end{align}
We are looking for the representation of solution of the problem (\ref{qw2}),
(\ref{qw1}) in the form%
\begin{equation}
z\left(  k\right)  =e_{m}^{\mathfrak{D}}\left(  k\right)  C+%
{\displaystyle\sum\limits_{j=-m+1}^{0}}
e_{m}^{\mathfrak{D}}\left(  k-m-j\right)  \omega\left(  j\right)  ,\ \ k\in
Z_{-m}^{\infty}, \label{q3}%
\end{equation}
where $C\in R^{n}$ is an unknown vector and $\omega:Z_{-m}^{0}\rightarrow
R^{n}$ is an unknown discrete function. The representation (\ref{q3}) is a
solution of homogeneous delay equation (\ref{qw2}) for any $C$ and $\omega$
and for $k\in Z_{0}^{\infty}$. Indeed, by the formula (\ref{rr11}) we get
\begin{align*}
\Delta z\left(  k\right)   &  =\Delta\left[  e_{m}^{\mathfrak{D}}\left(
k\right)  \right]  C+%
{\displaystyle\sum\limits_{j=-m+1}^{0}}
\Delta\left[  e_{m}^{\mathfrak{D}}\left(  k-m-j\right)  \right]  \omega\left(
j\right) \\
&  =D_{k}\left[  e_{m}^{\mathfrak{D}}\left(  k-m\right)  C+%
{\displaystyle\sum\limits_{k=-m+1}^{0}}
e_{m}^{\mathfrak{D}}\left(  k-2m-j\right)  \omega\left(  j\right)  \right] \\
&  =D_{k}z\left(  k-m\right)  ,\ \ \ k\in Z_{0}^{\infty}.
\end{align*}
So expression (\ref{q3}) solves (\ref{qw2}) for $k\in Z_{0}^{\infty}$. Now we
determine $C$ and $\omega$. By definition, $C$ and $\omega$ must satisfy the
initial condition (\ref{qw1}) for $k\in Z_{-m}^{0}$. For $k\in Z_{-m}^{0},$
(\ref{q3}) leads to relation%
\begin{align*}
z\left(  k\right)   &  =A^{-k}\varphi\left(  k\right)  =e_{m}^{\mathfrak{D}%
}\left(  k\right)  C+%
{\displaystyle\sum\limits_{j=-m+1}^{0}}
e_{m}^{\mathfrak{D}}\left(  k-m-j\right)  \omega\left(  j\right) \\
&  =e_{m}^{\mathfrak{D}}\left(  k\right)  C+%
{\displaystyle\sum\limits_{j=-m+1}^{k}}
e_{m}^{\mathfrak{D}}\left(  k-m-j\right)  \omega\left(  j\right)  +%
{\displaystyle\sum\limits_{j=k+1}^{0}}
e_{m}^{\mathfrak{D}}\left(  k-m-j\right)  \omega\left(  j\right) \\
&  =C+%
{\displaystyle\sum\limits_{j=-m+1}^{k}}
e_{m}^{\mathfrak{D}}\left(  k-m-j\right)  \omega\left(  j\right)  +%
{\displaystyle\sum\limits_{j=k+1}^{0}}
e_{m}^{\mathfrak{D}}\left(  k-m-j\right)  \omega\left(  j\right)  ,\ \ k\in
Z_{-m}^{0}.
\end{align*}
It follows that
\begin{align*}
A^{m}\varphi\left(  -m\right)   &  =C,\ \ k=-m,\\
A^{-k}\varphi\left(  k\right)   &  =C+%
{\displaystyle\sum\limits_{j=-m+1,j\leq k}^{k}}
\omega\left(  j\right)  ,\ \ k\in Z_{-m+1}^{0},
\end{align*}
and one can obtain%
\begin{align*}
\omega\left(  k\right)   &  =A^{-k}\varphi\left(  k\right)  -A^{-k+1}%
\varphi\left(  k-1\right)  ,\ \ k=-m,-m+1,...,0,\\
C  &  =A^{-m}\varphi\left(  -m\right)  .
\end{align*}
In order to get the formula (\ref{rep1}), it remains to insert $C$ and
$\omega$ into (\ref{q3}). Indeed%
\begin{align*}
x\left(  k\right)   &  =A^{k}z\left(  k\right)  =A^{k}\left(  e_{m}%
^{\mathfrak{D}}\left(  k\right)  A^{-m}\varphi\left(  -m\right)  +%
{\displaystyle\sum\limits_{j=-m+1}^{0}}
e_{m}^{\mathfrak{D}}\left(  k-m-j\right)  A^{-j}\left(  \varphi\left(
j\right)  -A\varphi\left(  j-1\right)  \right)  \right) \\
&  =\Phi\left(  k\right)  A^{-m}\varphi\left(  -m\right)  +A^{k}%
{\displaystyle\sum\limits_{j=-m+1}^{0}}
A^{-k+m+j}A^{k-m-j}\Phi\left(  k-m-j\right)  A^{-j}\left(  \varphi\left(
j\right)  -A\varphi\left(  j-1\right)  \right) \\
&  =\Phi\left(  k\right)  A^{-m}\varphi\left(  -m\right)  +%
{\displaystyle\sum\limits_{j=-m+1}^{0}}
A^{m+j}\Phi\left(  k-m-j\right)  A^{-j}\left(  \varphi\left(  j\right)
-A\varphi\left(  j-1\right)  \right)  .
\end{align*}

\end{proof}

\begin{corollary}
A solution $x:Z_{-m}^{\infty}\rightarrow R^{n}$ of initial value problem
(\ref{ds}) has a form%
\begin{align}
x\left(  k\right)   &  =\Phi\left(  k\right)  A^{-m}\varphi\left(  -m\right)
+%
{\displaystyle\sum\limits_{j=-m+1}^{0}}
A^{m+j}\Phi\left(  k-m-j\right)  A^{-j}\left(  \varphi\left(  j\right)
-A\varphi\left(  j-1\right)  \right) \nonumber\\
&  +%
{\displaystyle\sum\limits_{j=1}^{k}}
A^{m+j}\Phi\left(  k-m-j\right)  A^{-j}f\left(  j-1\right)  . \label{ww1}%
\end{align}

\end{corollary}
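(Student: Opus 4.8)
The plan is to derive the representation for the full non-homogeneous problem (\ref{ds}) by combining the homogeneous solution formula (\ref{rep1}) with a discrete analogue of the variation-of-constants (Duhamel) principle, carried out in the transformed variable $z(k)=A^{-k}x(k)$. Passing to $z$ turns (\ref{ds}) into (\ref{ds1})--(\ref{ds2}), i.e. $z(k+1)=z(k)+D_kz(k-m)+A^{-k-1}f(k)$ with $z(k)=A^{-k}\varphi(k)$ on $Z_{-m}^0$. The homogeneous part of $z$ is already given by (\ref{q3}) with $C=A^{-m}\varphi(-m)$ and $\omega(j)=A^{-j}\varphi(j)-A^{-j+1}\varphi(j-1)$, so it only remains to add a particular solution $z_p$ of the non-homogeneous equation that vanishes on $Z_{-m}^0$.

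First I would propose the candidate particular solution
\[
z_p(k)=\sum_{j=1}^{k}e_m^{\mathfrak{D}}\left(k-m-j\right)A^{-j}f(j-1),\qquad k\in Z_0^\infty,
\]
with the convention that the empty sum (for $k\le 0$) is zero, so the initial condition on $Z_{-m}^0$ is automatically satisfied. Then I would verify directly that $z_p$ solves the non-homogeneous recursion. Computing the forward difference,
\[
\Delta z_p(k)=\sum_{j=1}^{k+1}e_m^{\mathfrak{D}}\left(k+1-m-j\right)A^{-j}f(j-1)-\sum_{j=1}^{k}e_m^{\mathfrak{D}}\left(k-m-j\right)A^{-j}f(j-1),
\]
the top term of the first sum is the $j=k+1$ term, which equals $e_m^{\mathfrak{D}}(-m)A^{-k-1}f(k)=A^{-k-1}f(k)$ since $e_m^{\mathfrak{D}}(-m)=I$; the remaining terms pair off and, via Theorem \ref{thm:0} (the identity $e_m^{\mathfrak{D}}(k+1-m-j)-e_m^{\mathfrak{D}}(k-m-j)=D_{k-m-j}e_m^{\mathfrak{D}}(k-2m-j)$ applied with the shifted argument), collapse to $D_k\sum_{j=1}^{k}e_m^{\mathfrak{D}}(k-2m-j)A^{-j}f(j-1)=D_kz_p(k-m)$. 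Hence $z_p(k+1)-z_p(k)=D_kz_p(k-m)+A^{-k-1}f(k)$, as required. Adding $z_p$ to the homogeneous $z$ and then multiplying by $A^k$ to return to $x(k)=A^kz(k)$, exactly as in the last display of the proof of the preceding theorem, produces the first two lines of (\ref{ww1}) from the homogeneous part and the third line from $A^kz_p(k)=\sum_{j=1}^{k}A^k e_m^{\mathfrak{D}}(k-m-j)A^{-j}f(j-1)=\sum_{j=1}^{k}A^{m+j}\Phi(k-m-j)A^{-j}f(j-1)$, using $A^k e_m^{\mathfrak{D}}(r)A^{-j}=A^{m+j}\bigl(A^{r}e_m^{\mathfrak{D}}(r)\bigr)A^{-j}$ with $r=k-m-j$ and the relation $\Phi(r)=A^r e_m^{\mathfrak{D}}(r)$ visible from Theorem \ref{thm:1} together with the definition (\ref{exp}).

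The main obstacle is the careful bookkeeping in the telescoping step for $\Delta z_p$: one must track which summation index ranges the identity of Theorem \ref{thm:0} is valid on (it holds only for $k$ strictly inside a block $Z_{(l-1)(m+1)+1}^{l(m+1)-1}$), and check that the boundary cases $k=l(m+1)$ — where the extra term $P^{\mathfrak{D}}(l(m+1)+1,l+1)$ appears — are still absorbed correctly, exactly as handled in the proof of Theorem \ref{thm:0}; since that theorem is quoted as already covering both cases, the only genuine work here is confirming that the shift $k\mapsto k-m-j$ keeps the argument in the admissible range for every $j\in Z_1^k$, which follows because $e_m^{\mathfrak{D}}$ is defined piecewise on all of $Z_{-\infty}^\infty$ and the difference relation, extended by the $\Theta$ and $I$ branches, holds verbatim. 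A secondary, purely algebraic point is the index arithmetic $A^kA^{-k+m+j}A^{k-m-j}=A^{k}$, identical to the manipulation already performed in the previous proof, so no new idea is needed there.
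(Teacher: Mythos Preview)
Your overall architecture is exactly what the paper intends: the corollary is stated there without proof, and the natural route is precisely the Duhamel-type construction you describe in the transformed variable $z$, grafted onto the homogeneous representation already established. The algebraic conversion back to $x$ via $\Phi(r)=A^{r}e_m^{\mathfrak D}(r)$ is also handled correctly.

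However, there is a genuine gap in your telescoping step, and you in fact expose it yourself. You correctly record that Theorem~\ref{thm:0} applied at the shifted argument gives
\[
e_m^{\mathfrak D}(k+1-m-j)-e_m^{\mathfrak D}(k-m-j)=D_{k-m-j}\,e_m^{\mathfrak D}(k-2m-j),
\]
with the coefficient $D_{k-m-j}$ depending on $j$. You then assert that the sum over $j$ ``collapses to $D_k\sum_{j=1}^{k}e_m^{\mathfrak D}(k-2m-j)A^{-j}f(j-1)$''. That replacement of $D_{k-m-j}$ by a single $D_k$ is not justified: the sequence $\mathfrak D=\{D_0,D_1,\dots\}$ is not assumed constant, so the matrix factor cannot be pulled outside the $j$-sum. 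Consequently your candidate $z_p$ does \emph{not}, as written, satisfy $z_p(k+1)-z_p(k)=D_k z_p(k-m)+A^{-k-1}f(k)$; what the computation actually yields is $\sum_{j} D_{k-m-j}\,e_m^{\mathfrak D}(k-2m-j)A^{-j}f(j-1)+A^{-k-1}f(k)$, which is a different object.

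This is not a cosmetic issue of index bookkeeping at the block boundaries (your discussion of the $k=l(m+1)$ case is fine); it is the central mechanism of the argument that fails. The same slip is in fact embedded in the paper's own verification that the expression~(\ref{q3}) solves the homogeneous equation~(\ref{qw2}): there too the passage from $\Delta[e_m^{\mathfrak D}(k-m-j)]$ to $D_k\,e_m^{\mathfrak D}(k-2m-j)$ tacitly replaces $D_{k-m-j}$ by $D_k$. So your proposal faithfully mirrors the paper's method, including this defect. To repair the argument one would need either a shifted delayed exponential (a two-parameter fundamental matrix $e_m^{\mathfrak D}(k,s)$ built from the sequence $\{D_{s},D_{s+1},\dots\}$, so that the difference in $k$ produces the correct $D_k$), or a direct inductive verification that the closed form satisfies the recursion; the single-argument object $e_m^{\mathfrak D}(k-m-j)$ with the fixed sequence $\mathfrak D$ does not do the job when the $D_k$ genuinely vary.
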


\begin{remark}
If $B_{k}$ does not depend on $k$, that is, $B_{k}=B$ and matrices $A$ and $B$
are permutable ($AB=BA$), then $D_{k}=A^{-k-1}BA^{k-m}=A^{-1}BA^{-m}=:D$ then
the presentation (\ref{ww1}) coincides with the formula obtained in
\cite{diblik2}.
\end{remark}

\begin{acknowledgement}
I would like to thank the reviewers for giving constructive comments and
suggestions which would help me to improve the quality of the paper.
\end{acknowledgement}

\end{document}